\documentclass[12pt]{amsart}

\usepackage{amssymb}

\usepackage{enumitem}

\usepackage{graphicx}

\makeatletter
\@namedef{subjclassname@2020}{%
  \textup{2020} Mathematics Subject Classification}
\makeatother

\usepackage[T1]{fontenc}
\newtheorem{theorem}{Theorem}[section]
\newtheorem{lemma}[theorem]{Lemma}
\newtheorem{proposition}[theorem]{Proposition}
\newtheorem{corollary}[theorem]{Corollary}

\newtheorem{mainthm}{Main Theorem}

\numberwithin{equation}{section}

\newcommand{\cC}{\mathcal C}
\newcommand{\cK}{\mathcal K}
\newcommand{\dH}{d_{\mathrm H}}
\newcommand{\dist}{\operatorname{dist}}
\newcommand{\co}{\overline{\operatorname{co}}}
\newcommand{\norm}[1]{\left\lVert #1\right\rVert}
\newcommand{\abs}[1]{\left\lvert #1\right\rvert}
\newcommand{\pair}[2]{\left\langle #1,#2\right\rangle}

\begin{document}

\title{Surjective Hausdorff isometries of hyperspaces of bounded closed convex sets}

%%% Give just the affiliation and not the detailed postal address.

\author[L. Cheng]{Lixin Cheng}
\address{School of Mathematical Sciences\\
Xiamen University\\
361005 Xiamen, China}
\email{lxcheng@xmu.edu.cn}

\author[W. He]{Wuyi He}
\address{School of Mathematics and Statistics\\
Chongqing University of Posts and Telecommunications\\
400065 Chongqing, China}
\email{hewy@cqupt.edu.cn}

\author[C. Liu]{Chulei Liu}
\address{School of Mathematics and Statistics\\
Fuzhou University\\
350108 Fuzhou, China}
\email{chuleiliu@fzu.edu.cn}

\author[Z. Zheng]{Zheming Zheng}
\address{School of Mathematics and Statistics\\
Jiangxi Normal University\\
330022 Nanchang, China}
\email{proofshine@jxnu.edu.cn}

\date{}

\begin{abstract}
Let $X$ and $Y$ be real Banach spaces, and let $\cC(X)$ and $\cC(Y)$ denote the families of all nonempty bounded closed convex subsets of $X$ and $Y$, respectively, equipped with the Hausdorff metric. We prove that every surjective isometry $F:\cC(X)\to\cC(Y)$ is induced by a surjective affine isometry of the underlying spaces. More precisely, $F$ maps singleton sets onto singleton sets, and the map $T:X\to Y$ defined by $F(\{x\})=\{T(x)\}$ is a surjective affine isometry satisfying
\[
F(A)=T[A]:=\{T(x):\;x\in A\},\qquad(A\in\cC(X)).
\]
In particular, our theorem extends the result of Gruber and Lettl for finite-dimensional Euclidean spaces to arbitrary real Banach spaces, without imposing any additional assumptions on the underlying spaces.
\end{abstract}

%% Only one classification should be declared as primary.

\subjclass[2020]{Primary 46B04; Secondary 46B20, 52A07}

\keywords{Banach space, Hausdorff metric, surjective isometry, bounded closed convex set}

\maketitle

\section{Introduction}
The celebrated Mazur--Ulam theorem \cite{MazurUlam1932} asserts that every surjective isometry between two real normed spaces is necessarily affine. Since its appearance, a substantial part of the theory has been devoted to weakening either the surjectivity assumption or the exact preservation of distances. Nonsurjective isometries were studied, for example, by Figiel \cite{Figiel1968} and Godefroy and Kalton \cite{GodefroyKalton2003}. Approximate isometries and their surjective variants were investigated by Hyers and Ulam \cite{HyersUlam1945,HyersUlam1947}, Gevirtz \cite{Gevirtz1983}, Gruber \cite{Gruber1978}, Omladi\v{c} and \v{S}emrl \cite{OmladicSemrl1995}, Vestfrid \cite{Vestfrid2015}, and Cheng, Shen, Zhang and Zhou \cite{ChengShenZhangZhou2017}. The stability and weak stability of general or nonsurjective $\varepsilon$-isometries were further developed by Qian \cite{Qian1995}, \v{S}emrl and V\"ais\"al\"a \cite{SemrlVaisala2003}, Cheng, Dong and Zhang \cite{ChengDongZhang2013}, Cheng and Zhou \cite{ChengZhou2014}, Cheng, Cheng, Tu and Zhang \cite{ChengChengTuZhang2015}, Cheng, Tu and Zhang \cite{ChengTuZhang2016}, and Cheng and Dong \cite{ChengDong2020}. Coarse isometries provide another natural weakening: surjective coarse isometries were considered by Lindenstrauss and Szankowski \cite{LindenstraussSzankowski1985}, while the nonsurjective case was studied by Cheng, Fang, Luo and Sun \cite{ChengFangLuoSun2019}. These results show that the rigidity phenomenon behind the Mazur--Ulam theorem persists, in various forms, well beyond surjective exact isometries.

The set-valued representation problem was first investigated in finite dimensions. In 1975, Schneider \cite{Schneider1975} studied surjective Hausdorff isometries of the hyperspace of nonempty compact convex subsets of Euclidean space and proved that such isometries are induced by isometries of the underlying Euclidean space. In 1980, Gruber and Lettl \cite{GruberLettl1980} strengthened Schneider's result by showing that the surjectivity assumption on the hyperspace isometry can be omitted. In 1986, Bandt \cite{Bandt1986} obtained a further finite-dimensional extension: the corresponding representation remains valid for the hyperspace of all nonempty compact subsets, not necessarily convex, of a convex body in a finite-dimensional strictly convex normed space. Thus, the early theory established a strong finite-dimensional rigidity phenomenon for Hausdorff isometries of natural hyperspaces.

The first infinite-dimensional result in this direction was obtained by Zhou, Zhang and Liu in 2018 \cite{ZhouZhangLiu2018}. They considered hyperspaces of nonempty compact convex subsets of Banach spaces and proved the desired representation under a weak-star smoothness assumption on the dual unit balls, together with the normalization that the isometry fixes the origin. More precisely, under these assumptions a surjective Hausdorff isometry induces a surjective linear isometry between the underlying Banach spaces and acts pointwise on every compact convex set. They also explicitly asked whether the conclusion remains valid for arbitrary Banach spaces and whether the condition that the origin be fixed can be removed.

Cheng and Zheng subsequently developed the problem in two related directions. For the hyperspace of all nonempty bounded closed convex subsets, they proved in \cite{ChengZheng2021} that if at least one of the underlying Banach spaces is Asplund, then every surjective Hausdorff isometry preserves singleton sets and therefore induces a surjective affine isometry between the underlying spaces. Under the additional assumption that one of the spaces is Fr\'echet smooth or locally uniformly convex, they obtained the full setwise representation. For the compact-convex hyperspace, Cheng and Zheng later removed the Asplund assumption completely \cite{ChengZheng2022}: for arbitrary real Banach spaces, every surjective Hausdorff isometry maps singleton sets onto singleton sets, and hence restricts to a surjective affine isometry between the underlying spaces. They also obtained the full setwise representation under additional geometric assumptions such as strict convexity or G\^ateaux smoothness.

Against this historical background, let $X$ be a real Banach space. We denote by $\cC(X)$ the family of all nonempty bounded closed convex subsets of $X$, endowed with the Hausdorff metric, and by $\cK(X)$ the subfamily of all nonempty compact convex subsets. Identifying $x\in X$ with the singleton $\{x\}$, the space $X$ is canonically embedded in both hyperspaces. The remaining problem for the full bounded closed convex hyperspace is therefore whether every surjective Hausdorff isometry
\[
F:\cC(X)\longrightarrow\cC(Y)
\]
between arbitrary real Banach spaces must be induced by a surjective affine isometry of the underlying spaces, without any additional smoothness or geometric assumptions.

In this paper, we give an affirmative answer. We prove that every surjective Hausdorff isometry $F:\cC(X)\to\cC(Y)$ maps singleton sets onto singleton sets and is of the form
\[
F(A)=T[A]\qquad(A\in\cC(X)),
\]
where $T:X\to Y$ is a surjective affine isometry. Thus, no Asplundness, smoothness, strict convexity, reflexivity, separability, or finite-dimensionality assumption is required.

The proof is metric in nature. Its main step is to show, by means of outer parallel bodies, that the hyperspace isometry preserves enough metric information to recover the inclusion order. Singleton preservation then follows from order minimality, and the classical Mazur--Ulam theorem yields the affine representation.

Our main result is the following.

\begin{mainthm}\label{thm:main}
\nonumber
Let $X$ and $Y$ be real Banach spaces, and let
\[
F:\cC(X)\longrightarrow\cC(Y)
\]
be a surjective isometry for the Hausdorff metric. Then $F$ maps singleton sets onto singleton sets. The map $T:X\to Y$ defined by
\[
F(\{x\})=\{T(x)\}\qquad(x\in X)
\]
is a surjective affine isometry, and
\begin{equation}\label{eq:mainrep}
F(A)=T[A]=\{T(x):x\in A\}\qquad(A\in\cC(X)).
\end{equation}
In particular,
\[
A\subseteq B\quad\Longleftrightarrow\quad F(A)\subseteq F(B)
\qquad(A,B\in\cC(X)),
\]
and $F(\cK(X))=\cK(Y)$.
\end{mainthm}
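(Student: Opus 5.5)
The plan is to recover the inclusion order on $\cC(X)$ from the Hausdorff metric alone, using outer parallel bodies $A+rB_X$. Then singletons are forced to map to singletons because they are exactly the order-minimal elements, the Mazur--Ulam theorem makes $T$ affine, and the order description of a set through the singletons it contains gives $F(A)=T[A]$.

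\emph{Step 1: a metric characterization of parallel bodies.} For $A\in\cC(X)$ and $r>0$, write $A_r=\overline{A+rB_X}$. Since the Hausdorff distance between closed convex sets is determined by support functions, $\dH(A,C)=\sup_{\norm{f}\le1}\abs{\sigma_A(f)-\sigma_C(f)}$. Using $\sigma_{A_r}=\sigma_A+r\norm{\cdot}$, one checks that $\dH(A,A_r)=r$. One also checks that $A_r$ is the largest set $C$ satisfying $\dH(C,A)\le r$, and that $A_r$ is additive in the sense $\dH(A_r,A_s)=\abs{r-s}$. The aim is a purely metric description of $A_r$. The candidate is
\[
A_r=\text{the unique } C \text{ with } \dH(C,A)=r \text{ and } \dH(C,D)\le \dH(A,D)+r\ \text{for all } D,
\]
with equality for all $D$ that are large enough. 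To be safer, I would characterize the pair $(A,A_r)$ by a ``metric segment'' property. The sets $A_t$, $0\le t\le r$, form a segment from $A$ to $A_r$ that extends to an infinite geodesic ray $t\mapsto A_t$ with $\dH(A_s,A_t)=\abs{s-t}$. The key property to prove is that $C\supseteq A$ if and only if $\dH(C_s,A_t)$ behaves like $\max$ (through the support-function formula) for all large $s,t$. Concretely, I expect the statement
\[
A\subseteq C \iff \dH(A_t,C)=\dH(A,C)-t \text{ for } 0\le t\le \dH(A,C)
\]
to be the right one. Equivalently, $\sup_f(\sigma_C-\sigma_A-t\norm{f})$ decreases exactly linearly, which happens precisely when $\sigma_C\ge\sigma_A$ dominates in the relevant directions. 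I must verify the reverse direction carefully, since the sup might be attained on $\sigma_A-\sigma_C$.

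\emph{Step 2: transport by $F$.} Once inclusion $A\subseteq C$ is expressed through distances to the parallel bodies $A_t$, I need that $F(A_t)=F(A)_t$. The key is that $A_t$ is characterized metrically. I would show that $A_t$ is the unique point $M$ with $\dH(A,M)=t$ and $\dH(M,A_s)=s-t$ for all $s>t$ lying on the ray. The ray itself should be characterized intrinsically, for instance as the set of points $C$ such that $\dH(C,D)\to$ grows in a controlled way. This is the main obstacle: geodesics in $\cC(X)$ are far from unique, so I would instead characterize parallel bodies by the maximality property ``$\dH(M,D)\le \dH(A,D)+t$ for all $D$, with equality for all $D$ far away in every direction''. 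I would then prove that only $A_t$ realizes equality against all sets of the form $\{x\}_R$ with $R$ large. Since $F$ is a bijective isometry, any such characterization quantified over all of $\cC$ transfers to $\cC(Y)$.

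\emph{Step 3: conclusion.} With order preservation in both directions (apply the argument to $F^{-1}$), $F$ is an order isomorphism of $(\cC(X),\subseteq)$ onto $(\cC(Y),\subseteq)$. Minimal elements are singletons, so $F$ maps singletons onto singletons. Then $T$ is a surjective isometry $X\to Y$, hence affine by Mazur--Ulam. For general $A$, we have $x\in A$ iff $\{x\}\subseteq A$ iff $\{Tx\}\subseteq F(A)$, so $F(A)=T[A]$. The claim $F(\cK(X))=\cK(Y)$ follows because $T$ is a homeomorphism.
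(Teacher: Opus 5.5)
Your outer architecture matches the paper's: recover inclusion through outer parallel bodies, get singletons as the minimal elements, apply Mazur--Ulam, and use $x\in A\iff\{x\}\subseteq A$. Step~3 is fine once $F$ is known to be an order isomorphism. But neither load-bearing step is established.

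The inclusion criterion in Step~1 is false in the forward direction. For $A=\{0\}$ and $C=[0,e]$ with $\norm{e}=1$ one has $A\subseteq C$ and $\dH(A,C)=1$. Yet $\dH(tB_X,[0,e])=\max\{t,1-t\}\neq 1-t$ for $t\in(1/2,1]$. In general, for $A\subseteq C$ one gets $\dH(A_t,C)=\max\{d-t,\,t-m\}$, where $d,m$ are the sup and inf of $\sigma_C-\sigma_A$ on $S_{X^*}$. Your identity, imposed for all $t\le\dH(A,C)$, forces $C=A_{\dH(A,C)}$ at the endpoint, so it detects parallel bodies, not inclusion. Restricting to small $t$ breaks the other direction: $A=\{x_0\}$ with $\norm{x_0}=R+\varepsilon$ and $C=RB_X$ satisfy it for $t\le R$, although $A\not\subseteq C$.

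The criterion that works uses \emph{large} $t$. If $t>\dH(A,C)$, then $t+\sigma_A-\sigma_C>0$ on $S_{X^*}$, so the absolute value drops out and $\dH(A\oplus tB_X,C)=t+\sup_{S_{X^*}}(\sigma_A-\sigma_C)$. Hence $A\subseteq C\iff\dH(A\oplus tB_X,C)\le t$. This repair is easy, but it is only useful once you know $F(A\oplus tB_X)=F(A)\oplus tB_Y$.

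That identity is the real gap. You flag it as the main obstacle but give no argument for it. Unique interpolation along $t\mapsto A_t$, which you state correctly, only shows that $F$ carries this ray to \emph{some} geodesic ray with unique interpolation; it does not identify the image ray. Your fallback conditions do not help:
\begin{enumerate}
\item $\dH(C,D)\le\dH(A,D)+r$ holds for every $C$ with $\dH(C,A)=r$, by the triangle inequality, so it carries no information.
\item For $D=x+RB_X$ with $R$ large one actually gets $\dH(A_t,D)=\dH(A,D)-t$, so $A_t$ itself violates your proposed equality.
\item Against singletons the equality does hold, but it does not single out $A_t$. In $\ell_\infty^2$ with $A=\{0\}$ and $t=1$, the diamond $\{\abs{x_1}+\abs{x_2}\le1\}$ satisfies exactly the same equalities as $B_X$.
\item Decisively, any condition quantified over singletons or balls cannot be pushed through $F$, because preservation of those classes is what is being proved.
\end{enumerate}

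The missing idea is linear. Via support functions, $\cC(Y)$ sits isometrically in the normed space $\mathfrak{E}(Y)=\mathfrak{V}(Y)-\mathfrak{V}(Y)$ as a convex cone. So the convex combination of $\sigma_{F(A_s)}$ and $\sigma_{F(A_t)}$ is always a metric interpolant, and uniqueness, transported using surjectivity of $F$, forces $\sigma_{F(A_t)}=\sigma_{F(A)}+t\varphi_A$. From there:
\begin{enumerate}
\item Since $\dH(A_t,C_t)=\dH(A,C)$ stays bounded in $t$, the direction $\varphi_A=\varphi$ is independent of $A$.
\item Uniqueness of midpoints makes $\varphi$ an extreme point of the unit ball of $\mathfrak{E}(Y)$.
\item The lattice element $\sigma_{B_Y}-\abs{\varphi}$ then forces $\abs{\varphi}=1$ on $S_{Y^*}$.
\item Connectedness of $S_{Y^*}$ yields $\varphi=\sigma_{B_Y}$. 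When $\dim Y=1$, one instead uses that $A\mapsto A\oplus tB_X$ is not surjective while translations are.
\end{enumerate}
Without an argument of this kind, the order isomorphism, and with it the whole theorem, remains unproved.
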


The paper is organized as follows. Section~2 recalls the representation of the support-function of the Hausdorff metric. Section~3 establishes the metric rigidity of outer parallel-body rays. Sections~4 and~5 show that these rays determine the inclusion order. Section~6 completes the proof of Theorem~\ref{thm:main} and records several consequences.

\section{Preliminaries}

Throughout the paper, all Banach spaces are real and nonzero. We write $X^*$ for the dual of $X$, and $B_X$ and $S_X$ for its closed unit ball and unit sphere. For a subset $M\subseteq X$, $\co M$ denotes its norm-closed convex hull. For $A,B\in\cC(X)$, the Hausdorff distance is
\[
\dH(A,B)=\max\left\{
\sup_{a\in A}\dist(a,B),\
\sup_{b\in B}\dist(b,A)
\right\}.
\]
We use the closed Minkowski sum
\[
A\oplus B:=\overline{A+B}
\]
and the usual scalar multiplication $tA$ for $t\ge0$, with $0A=\{0\}$.

For $A\in\cC(X)$, its support function is
\[
\sigma_A(x^*)=\sup_{a\in A}\pair{x^*}{a},\qquad x^*\in X^*.
\]
The support-function representation for bounded closed convex sets is discussed by Cheng et al. \cite{ChengEtAl2018} and Cheng and Zheng \cite{ChengZheng2021}. In particular,
\begin{equation}\label{eq:hausdorff-support}
\dH(A,B)=\sup_{x^*\in B_{X^*}}\abs{\sigma_A(x^*)-\sigma_B(x^*)}.
\end{equation}
Moreover, support functions determine these sets and their inclusion order:
\[
A\subseteq B\quad\Longleftrightarrow\quad \sigma_A\le\sigma_B\text{ on }X^*.
\]
These facts also follow directly from the separation of a point from a closed convex set.

By \eqref{eq:hausdorff-support}, the map $A\mapsto \sigma_A|_{B_{X^*}}$ is an isometric embedding into $\ell_\infty(B_{X^*})$. We continue to regard support functions as functions on all of $X^*$ and set
\[
\mathfrak{V}(X):=\{\sigma_A:A\in\cC(X)\},\qquad \mathfrak{E}(X):=\mathfrak{V}(X)-\mathfrak{V}(X).
\]
For $h\in \mathfrak{E}(X)$, define
\[
\norm{h}_\infty:=\sup_{x^*\in B_{X^*}}\abs{h(x^*)}.
\]
Every element of $\mathfrak{E}(X)$ is positively homogeneous and is therefore determined by its restriction to $B_{X^*}$. Thus, this formula defines a norm on $\mathfrak{E}(X)$. Since
\[
\sigma_{A\oplus B}=\sigma_A+\sigma_B,
\qquad
\sigma_{tA}=t\sigma_A\quad(t\ge0),
\]
$\mathfrak{V}(X)$ is a convex cone, and $\mathfrak{E}(X)$ is a real normed linear space.

The cone $\mathfrak{V}(X)$ is closed in $\mathfrak{E}(X)$ with respect to $\norm{\cdot}_\infty$. No completeness of $\mathfrak{E}(X)$ is needed.

For the unit ball, one has
\begin{equation}\label{eq:ball-support}
\sigma_{B_X}(x^*)=\norm{x^*}\qquad(x^*\in X^*),
\end{equation}
hence $\norm{\sigma_{B_X}}_\infty=1$. The outer parallel bodies satisfy
\begin{equation}\label{eq:parallel-support}
P_t^X(A)=A\oplus tB_X,
\qquad
\sigma_{P_t^X(A)}=\sigma_A+t\sigma_{B_X}
\qquad(t\ge0).
\end{equation}

The next lemma records the elementary lattice property needed in the following. For the related Banach-lattice construction, see Cheng et al. \cite{ChengEtAl2018}.

\begin{lemma}[Vector-lattice structure]\label{lem:lattice}
The space $\mathfrak{E}(X)$ is closed under pointwise maximum, pointwise minimum, and absolute value. Consequently, it is a vector lattice for pointwise order.
\end{lemma}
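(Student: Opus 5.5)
Since every $h\in\mathfrak{E}(X)$ can be written as $h=\sigma_A-\sigma_B$, and the lattice operations are translation-compatible in the sense that $\max\{h,k\}=k+\max\{h-k,0\}$, it suffices to show that $h^+:=\max\{h,0\}$ belongs to $\mathfrak{E}(X)$ whenever $h\in\mathfrak{E}(X)$. Then $\max\{h,k\}=k+(h-k)^+$, $\min\{h,k\}=h+k-\max\{h,k\}$ and $\abs{h}=h^++(-h)^+$ all lie in $\mathfrak{E}(X)$. Since $\mathfrak{E}(X)$ is a linear subspace of the real functions on $X^*$, closure under pointwise maximum then makes it a vector lattice for the pointwise order.

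For $h=\sigma_A-\sigma_B$ we have $\max\{\sigma_A-\sigma_B,0\}=\max\{\sigma_A,\sigma_B\}-\sigma_B$. It is therefore enough to show that $\max\{\sigma_A,\sigma_B\}\in\mathfrak{V}(X)$ for $A,B\in\cC(X)$. The natural candidate is $C=\co(A\cup B)$. This set is nonempty, convex and closed. It is bounded, since $A\cup B$ lies in a ball and the closed convex hull stays in that ball, so $C\in\cC(X)$. For each $x^*\in X^*$ the supremum of the continuous linear functional $x^*$ over the closed convex hull of a set equals its supremum over the set itself: the sublevel set $\{x:\pair{x^*}{x}\le s\}$ is closed and convex, so it contains $C$ as soon as it contains $A\cup B$. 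Hence
\[
\sigma_C(x^*)=\sup_{x\in A\cup B}\pair{x^*}{x}=\max\{\sigma_A(x^*),\sigma_B(x^*)\}.
\]
This gives $h^+=\sigma_C-\sigma_B\in\mathfrak{E}(X)$.

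No step is really an obstacle. The only points to check are that $\co(A\cup B)$ is bounded, so that it lies in $\cC(X)$, and that the supremum identity above holds, which is immediate from the continuity and linearity of $x^*$. The formulas expressing $\min$ and $\abs{\cdot}$ through $\max$ are the standard pointwise identities. The minimum could also be handled directly as $\min\{h,k\}=-\max\{-h,-k\}$.
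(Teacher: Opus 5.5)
Your proof is correct and follows essentially the same route as the paper. Both arguments rest on the identity $\sigma_A\vee\sigma_B=\sigma_{\co(A\cup B)}$. The paper writes $h_1\vee h_2=[(p_1+q_2)\vee(p_2+q_1)]-(q_1+q_2)$ directly, whereas you first reduce to the positive part via $\max\{h,k\}=k+(h-k)^+$ and $(\sigma_A-\sigma_B)^+=\sigma_A\vee\sigma_B-\sigma_B$. You also verify the boundedness of $\co(A\cup B)$ and the supremum identity, which the paper takes for granted.
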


\begin{proof}
Write
\[
h_1=p_1-q_1,
\qquad
h_2=p_2-q_2,
\qquad
p_i,q_i\in \mathfrak{V}(X).
\]
Pointwise,
\[
h_1\vee h_2
=\bigl[(p_1+q_2)\vee(p_2+q_1)\bigr]-(q_1+q_2).
\]
The Sums of support functions belong to $\mathfrak{V}(X)$. Also, for $A,B\in\cC(X)$,
\[
\sigma_A\vee\sigma_B=\sigma_{\co(A\cup B)}\in \mathfrak{V}(X).
\]
Hence $h_1\vee h_2\in \mathfrak{E}(X)$. The identities
\[
h_1\wedge h_2=-\bigl((-h_1)\vee(-h_2)\bigr),
\qquad
\abs{h_1}=h_1\vee(-h_1)
\]
give the remaining assertions.
\end{proof}

\section{Metric rigidity of outer parallel bodies}

The following lemma uses the identity $\sigma_{B_X}=1$ on $S_{X^*}$.

\begin{lemma}[Unique interpolation on parallel-body rays]\label{lem:unique-interpolation}
Let $A\in\cC(X)$, and let $0\le s<r<t$. Then $P_r^X(A)$ is the unique $C\in\cC(X)$ satisfying
\[
\dH(P_s^X(A),C)=r-s,
\qquad
\dH(C,P_t^X(A))=t-r.
\]
\end{lemma}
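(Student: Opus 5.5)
Existence is a direct computation with support functions. By \eqref{eq:parallel-support} and \eqref{eq:hausdorff-support} we have
\[
\dH(P_s^X(A),P_r^X(A))=\sup_{x^*\in B_{X^*}}(r-s)\norm{x^*}=r-s,
\]
and likewise $\dH(P_r^X(A),P_t^X(A))=t-r$. So $C=P_r^X(A)$ satisfies both equalities.

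For uniqueness, let $C\in\cC(X)$ satisfy both equalities. Put $h=\sigma_C-\sigma_A$, which lies in $\mathfrak{E}(X)$. The plan is to show that $h=r\sigma_{B_X}$ on $S_{X^*}$. By positive homogeneity this extends to all of $X^*$, since both sides vanish at $0$. Then $\sigma_C=\sigma_A+r\sigma_{B_X}=\sigma_{P_r^X(A)}$, and because support functions determine sets, $C=P_r^X(A)$.

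Fix $x^*\in S_{X^*}$. Recall from \eqref{eq:ball-support} that $\sigma_{B_X}(x^*)=1$. The first equality, evaluated through \eqref{eq:hausdorff-support} at $x^*$, gives
\[
\abs{h(x^*)-s}\le r-s,
\]
and hence $h(x^*)\le r$. The second equality gives
\[
\abs{t-h(x^*)}\le t-r,
\]
and hence $h(x^*)\ge r$. Together these force $h(x^*)=r$, which is what we wanted.

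There is no real obstacle here. The only point to check is that the pointwise bounds come from taking the supremum over $B_{X^*}\supseteq S_{X^*}$. The key structural fact is the identity $\sigma_{B_X}\equiv1$ on $S_{X^*}$, which makes the two one-sided constraints meet exactly at $r$. This is the "metric segment" rigidity of the ray $t\mapsto P_t^X(A)$: the two triangle-type constraints pin down the value of the support function at every unit functional.
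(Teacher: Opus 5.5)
Your proposal is correct and follows essentially the same argument as the paper. You evaluate the two Hausdorff-distance constraints pointwise on $S_{X^*}$, where $\sigma_{B_X}\equiv1$, and squeeze $\sigma_C-\sigma_A$ to the value $r$; positive homogeneity then gives $\sigma_C=\sigma_{P_r^X(A)}$. The paper's only difference is cosmetic: it works with the shifted function $\sigma_C-\sigma_A-s\sigma_{B_X}$ and writes the two bounds as $-a\le h\le a$ and $a\le h\le a+2b$.
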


\begin{proof}
Put $a=r-s$ and $b=t-r$. By \eqref{eq:parallel-support},
\[
\dH(P_s^X(A),P_r^X(A))=\norm{(r-s)\sigma_{B_X}}_\infty=r-s,
\]
and similarly $\dH(P_r^X(A),P_t^X(A))=t-r$.

Suppose that $C\in\cC(X)$ satisfies the prescribed equalities, and set
\[
h:=\sigma_C-\sigma_A-s\sigma_{B_X}.
\]
Equations~\eqref{eq:hausdorff-support} and \eqref{eq:parallel-support} give
\[
\norm{h}_\infty=a,
\qquad
\norm{(a+b)\sigma_{B_X}-h}_\infty=b.
\]
For every $x^*\in S_{X^*}$, these equalities imply
\[
-a\le h(x^*)\le a,
\qquad
a\le h(x^*)\le a+2b.
\]
Thus, $h(x^*)=a$ on $S_{X^*}$. Positive homogeneity yields $h=a\sigma_{B_X}$ on $X^*$. Therefore
\[
\sigma_C=\sigma_A+r\sigma_{B_X}=\sigma_{P_r^X(A)},
\]
 hence $C=P_r^X(A)$.
\end{proof}

The same calculation gives
\begin{equation}\label{eq:ray-distance}
\dH(P_s^X(A),P_t^X(A))=\abs{t-s}\qquad(s,t\ge0).
\end{equation}
Thus each map $t\mapsto P_t^X(A)$ is an isometric ray with unique metric interpolation.

\section{Rigidity of the image direction}

\begin{proposition}[Common image direction]\label{prop:common-direction}
There exists $\varphi\in \mathfrak{V}(Y)$ with $\norm{\varphi}_\infty=1$ such that
\begin{equation}\label{eq:image-ray-support}
\sigma_{F(P_t^X(A))}=\sigma_{F(A)}+t\varphi
\end{equation}
for every $A\in\cC(X)$ and $t\ge0$. Equivalently, there is $D\in\cC(Y)$ with $\varphi=\sigma_D$ such that
\begin{equation}\label{eq:image-ray-set}
F(P_t^X(A))=F(A)\oplus tD.
\end{equation}
\end{proposition}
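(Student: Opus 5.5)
We transport the ray structure of Section~3 through $F$, and the plan has three steps. First, show that the image of each parallel-body ray is an affine ray in support-function space. Second, show that its direction $\varphi_A$ is a support function of norm one. Third, show that $\varphi_A$ does not depend on $A$.

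\emph{Step 1 (each image ray is affine).} Fix $A\in\cC(X)$ and set $g(t):=\sigma_{F(P_t^X(A))}\in\mathfrak{V}(Y)$. Since $F$ is an isometry, \eqref{eq:ray-distance} gives $\norm{g(t)-g(s)}_\infty=\abs{t-s}$. Since $F$ is a surjective isometry, Lemma~\ref{lem:unique-interpolation} also transfers. For $0\le s<r<t$, the set $F(P_r^X(A))$ is the unique $C'\in\cC(Y)$ with
\[
\dH(F(P_s^X(A)),C')=r-s, \qquad \dH(C',F(P_t^X(A)))=t-r.
\]
Indeed, any such $C'$ equals $F(C)$ for some $C$, and $C$ then satisfies the interpolation conditions in $X$.

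Now consider the midpoint candidate $\sigma'=\tfrac12(g(s)+g(t))$, for $r=(s+t)/2$. It lies in $\mathfrak{V}(Y)$ because that set is a convex cone, so $\sigma'=\sigma_{C'}$ for some $C'\in\cC(Y)$. It is at distance $(t-s)/2$ from both endpoints. By uniqueness, $g(r)=\tfrac12(g(s)+g(t))$. So $g$ is midpoint-affine; being continuous, it is affine:
\[
g(t)=\sigma_{F(A)}+t\varphi_A, \qquad \varphi_A\in\mathfrak{E}(Y),\ \norm{\varphi_A}_\infty=1.
\]

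\emph{Step 2 ($\varphi_A$ is a support function).} We have $\varphi_A=\lim_{t\to\infty}g(t)/t$, and $g(t)/t\in\mathfrak{V}(Y)$. Since $\mathfrak{V}(Y)$ is closed in $\mathfrak{E}(Y)$, it follows that $\varphi_A\in\mathfrak{V}(Y)$. Hence $\varphi_A=\sigma_{D_A}$ for some $D_A\in\cC(Y)$, and then $F(P_t^X(A))=F(A)\oplus tD_A$ by the support-function identities.

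\emph{Step 3 (the direction is independent of $A$).} This is the main obstacle. Take $A,B\in\cC(X)$ with $d:=\dH(A,B)$. Parallel translation preserves distances:
\[
\dH(P_t^X(A),P_t^X(B))=\norm{\sigma_A-\sigma_B}_\infty=d.
\]
Applying $F$ gives
\[
\norm{\sigma_{F(A)}-\sigma_{F(B)}+t(\varphi_A-\varphi_B)}_\infty=d \quad\text{for all } t\ge0.
\]
Dividing by $t$ and letting $t\to\infty$ yields $\varphi_A=\varphi_B$. The argument is simple once Step~1 is secured, so the real difficulty sits in Step~1. The points to verify there are that the interpolation characterization really transfers, which uses surjectivity, and that the convex combination stays in $\mathfrak{V}(Y)$, so that uniqueness forces affinity.

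Setting $\varphi:=\varphi_A$ and $D:=D_A$, for any $A$, gives \eqref{eq:image-ray-support} and \eqref{eq:image-ray-set}.
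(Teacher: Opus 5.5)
Your proposal is correct and follows the paper's proof essentially step for step: surjectivity transfers the unique interpolation, convexity of $\mathfrak{V}(Y)$ forces the image ray to be affine, closedness of $\mathfrak{V}(Y)$ places $\varphi_A$ in the cone, and $\dH(P_t^X(A),P_t^X(B))=\dH(A,B)$ with $t\to\infty$ shows the direction does not depend on $A$. The only difference is cosmetic: you use midpoints together with continuity, whereas the paper applies uniqueness directly to the convex combination with $\lambda=(r-s)/(t-s)$, which avoids the Jensen-type continuity step.
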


\begin{proof}
Fix $A\in\cC(X)$ and define
\[
\eta_A(t):=\sigma_{F(P_t^X(A))}\in \mathfrak{V}(Y),
\qquad t\ge0.
\]
Because $F$ is surjective, every point of $\mathfrak{V}(Y)$ corresponds to a set with a preimage under $F$. Lemma~\ref{lem:unique-interpolation} therefore implies that, for $0\le s<r<t$, $\eta_A(r)$ is the unique point of $\mathfrak{V}(Y)$ at distances $r-s$ and $t-r$ from $\eta_A(s)$ and $\eta_A(t)$, respectively.

Set $\lambda=(r-s)/(t-s)$. Since $\mathfrak{V}(Y)$ is convex, the point
\[
(1-\lambda)\eta_A(s)+\lambda\eta_A(t)
\]
belongs to $\mathfrak{V}(Y)$. Its distances to the endpoints in $\mathfrak{E}(Y)$ are $r-s$ and $t-r$, by \eqref{eq:ray-distance}. Uniqueness gives
\begin{equation}\label{eq:affine-ray}
\eta_A(r)=(1-\lambda)\eta_A(s)+\lambda\eta_A(t).
\end{equation}
Let $\varphi_A=\eta_A(1)-\eta_A(0)$. Applying \eqref{eq:affine-ray} first to $(s,r,t)=(0,u,1)$ for $0<u<1$ and then to $(s,r,t)=(0,1,u)$ for $u>1$, we obtain
\[
\eta_A(u)=\eta_A(0)+u\varphi_A\qquad(u\ge0).
\]
Equation~\eqref{eq:ray-distance} gives $\norm{\varphi_A}_\infty=1$. Moreover,
\[
u^{-1}\eta_A(u)=u^{-1}\eta_A(0)+\varphi_A\in \mathfrak{V}(Y)\qquad(u>0).
\]
Since $\mathfrak{V}(Y)$ is closed in $\mathfrak{E}(Y)$, letting $u\to\infty$ yields $\varphi_A\in \mathfrak{V}(Y)$.

For $A,C\in\cC(X)$, equation~\eqref{eq:parallel-support} gives
\[
\dH(P_t^X(A),P_t^X(C))=\dH(A,C)\qquad(t\ge0).
\]
Applying $F$ and using the affine formulas for the image rays, we obtain
\[
\norm{\sigma_{F(A)}-\sigma_{F(C)}+t(\varphi_A-\varphi_C)}_\infty
=\dH(A,C)
\qquad(t\ge0).
\]
Dividing by $t$ and letting $t\to\infty$ gives $\varphi_A=\varphi_C$. Thus the direction is independent of $A$, proving \eqref{eq:image-ray-support}. Writing $\varphi=\sigma_D$ gives \eqref{eq:image-ray-set}.
\end{proof}

\begin{lemma}[Extremality of the common direction]\label{lem:extreme-direction}
Under the hypotheses of Proposition~\ref{prop:common-direction}, the common direction $\varphi$ is an extreme point of the unit ball $B_{\mathfrak{E}(Y)}$.
\end{lemma}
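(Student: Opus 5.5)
The plan is to turn the unique-interpolation property of Lemma~\ref{lem:unique-interpolation} into a statement about the image rays in $\mathfrak{V}(Y)$, and then show that a nontrivial convex splitting of $\varphi$ would produce a second metric midpoint.

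\textbf{Step 1: midpoints of image rays are unique.} Fix $A\in\cC(X)$ and write, as in Proposition~\ref{prop:common-direction},
\[
\eta_A(t)=\sigma_{F(P_t^X(A))}=\sigma_{F(A)}+t\varphi .
\]
Take $s=0$, $r=1$, $t=2$. Let $\gamma\in\mathfrak{V}(Y)$ satisfy $\norm{\gamma-\eta_A(0)}_\infty=1$ and $\norm{\gamma-\eta_A(2)}_\infty=1$. By surjectivity, $\gamma=\sigma_{F(C)}$ for some $C\in\cC(X)$. Because $F$ is an isometry and \eqref{eq:hausdorff-support} holds, $C$ satisfies the hypotheses of Lemma~\ref{lem:unique-interpolation}. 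Hence $C=P_1^X(A)$, and so $\gamma=\eta_A(1)=\sigma_{F(A)}+\varphi$. Thus $\eta_A(1)$ is the only point of $\mathfrak{V}(Y)$ at distance $1$ from both $\eta_A(0)$ and $\eta_A(2)$.

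\textbf{Step 2: a competing midpoint.} Suppose $\varphi=\tfrac12(g_1+g_2)$ with $g_1,g_2\in B_{\mathfrak{E}(Y)}$. Set $\gamma=\eta_A(0)+g_1$. Then
\[
\norm{\gamma-\eta_A(0)}_\infty=\norm{g_1}_\infty\le1,
\qquad
\norm{\gamma-\eta_A(2)}_\infty=\norm{g_1-2\varphi}_\infty=\norm{g_2}_\infty\le1 .
\]
By the triangle inequality and \eqref{eq:ray-distance}, these two distances add up to at least $\norm{\eta_A(2)-\eta_A(0)}_\infty=2$. Since each is at most $1$, both equal $1$.

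\textbf{Step 3: choosing $A$ so that $\gamma\in\mathfrak{V}(Y)$.} This is the main obstacle, because $g_1$ is only a difference of support functions. Write $g_1=p-q$ with $p=\sigma_P$ and $q=\sigma_Q$ in $\mathfrak{V}(Y)$. By surjectivity of $F$, pick $A\in\cC(X)$ with $F(A)=Q$. Then
\[
\gamma=\sigma_Q+p-q=p\in\mathfrak{V}(Y).
\]
Step 1 now applies to this $A$ and gives $\gamma=\eta_A(1)$, that is, $g_1=\varphi$. Consequently $g_2=2\varphi-g_1=\varphi$.

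Finally, $\norm{\varphi}_\infty=1$ by Proposition~\ref{prop:common-direction}, so $\varphi\in B_{\mathfrak{E}(Y)}$. We have shown that every representation of $\varphi$ as a midpoint of two points of $B_{\mathfrak{E}(Y)}$ is trivial, so $\varphi$ is an extreme point of $B_{\mathfrak{E}(Y)}$.
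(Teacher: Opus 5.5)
Your proof is correct and follows the paper's argument. You use surjectivity to choose a base point on an image ray so that a competing midpoint lies in $\mathfrak{V}(Y)$, and then apply the uniqueness of metric midpoints transported from Lemma~\ref{lem:unique-interpolation}. The only difference is cosmetic: you take the base point $\sigma_Q$ from the decomposition of $g_1$ alone and then recover $g_2=2\varphi-g_1$, whereas the paper uses $\sigma_{D_1}+\sigma_{D_2}$ so that both competing midpoints lie in $\mathfrak{V}(Y)$ at once.
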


\begin{proof}
Suppose
\[
\varphi=\frac{\psi_1+\psi_2}{2},
\qquad
\psi_1,\psi_2\in B_{\mathfrak{E}(Y)}.
\]
Choose $C_1,D_1,C_2,D_2\in\cC(Y)$ such that
\[
\psi_1=\sigma_{C_1}-\sigma_{D_1},
\qquad
\psi_2=\sigma_{C_2}-\sigma_{D_2}.
\]
Set
\[
\sigma_H:=\sigma_{D_1}+\sigma_{D_2}
=\sigma_{D_1\oplus D_2}\in\mathfrak{V}(Y).
\]
Then
\[
\sigma_H+\psi_1
=\sigma_{C_1}+\sigma_{D_2}\in\mathfrak{V}(Y),
\qquad
\sigma_H+\psi_2
=\sigma_{D_1}+\sigma_{C_2}\in\mathfrak{V}(Y),
\]
and
\[
\sigma_H+2\varphi
=\sigma_{C_1}+\sigma_{C_2}\in\mathfrak{V}(Y).
\]
Moreover,
\[
2=\norm{2\varphi}_\infty
=\norm{\psi_1+\psi_2}_\infty
\le \norm{\psi_1}_\infty+\norm{\psi_2}_\infty
\le2.
\]
Hence $\norm{\psi_1}_\infty=\norm{\psi_2}_\infty=1$, so both $\sigma_H+\psi_1$ and $\sigma_H+\psi_2$ are metric midpoints between $\sigma_H$ and $\sigma_H+2\varphi$.

By surjectivity, choose $A\in\cC(X)$ such that $\sigma_H=\sigma_{F(A)}$. Proposition~\ref{prop:common-direction} yields
\[
\sigma_H+t\varphi=\sigma_{F(P_t^X(A))}\qquad(t\ge0).
\]
Transporting Lemma~\ref{lem:unique-interpolation} through $F$, we see that $\sigma_H+\varphi$ is the unique metric midpoint between $\sigma_H$ and $\sigma_H+2\varphi$ in $\mathfrak{V}(Y)$. Therefore
\[
\sigma_H+\psi_1=\sigma_H+\varphi=\sigma_H+\psi_2,
\]
and hence $\psi_1=\psi_2=\varphi$.
\end{proof}

\begin{lemma}[Identification of the common direction]\label{lem:identify-direction}
Under the hypotheses of Proposition~\ref{prop:common-direction},
\[
\varphi=\sigma_{B_Y}.
\]
\end{lemma}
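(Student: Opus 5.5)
The plan is to show in two steps that the extreme point $\varphi=\sigma_D$ of Lemma~\ref{lem:extreme-direction} must equal $\norm{\cdot}$ on $Y^*$. First, extremality together with the lattice structure of Lemma~\ref{lem:lattice} forces $\abs{\varphi}=\sigma_{B_Y}$ pointwise. Second, a connectedness argument on $S_{Y^*}$, together with a separate unboundedness argument using surjectivity, excludes the sign $-1$.

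\emph{Step 1: $\abs{\varphi}=\sigma_{B_Y}$.} Since $\norm{\varphi}_\infty=1$ and $\varphi$ is positively homogeneous, $\abs{\varphi(y^*)}\le\norm{y^*}=\sigma_{B_Y}(y^*)$ on $Y^*$. By Lemma~\ref{lem:lattice}, $g:=\sigma_{B_Y}-\abs{\varphi}\in\mathfrak{E}(Y)$, and $g\ge0$. I will check pointwise that $\abs{\varphi\pm g}\le\sigma_{B_Y}$.
\begin{itemize}
\item Where $\varphi\ge0$, one has $\varphi+g=\sigma_{B_Y}$ and $\varphi-g=2\varphi-\sigma_{B_Y}$. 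The latter lies in $[-\sigma_{B_Y},\sigma_{B_Y}]$ because $0\le\varphi\le\sigma_{B_Y}$.
\item Where $\varphi<0$, the argument is symmetric.
\end{itemize}
Hence $\varphi\pm g\in B_{\mathfrak{E}(Y)}$ and $\varphi=\tfrac12\bigl((\varphi+g)+(\varphi-g)\bigr)$. Lemma~\ref{lem:extreme-direction} gives $g=0$, that is, $\varphi(y^*)\in\{\norm{y^*},-\norm{y^*}\}$ for every $y^*$. Note also that $\varphi\le\sigma_{B_Y}$ means $D\subseteq B_Y$.

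\emph{Step 2: the sign is $+$.} Let $N=\{y^*\in S_{Y^*}:\sigma_D(y^*)=-1\}$. I will show that $N$ is relatively open in $S_{Y^*}$.
\begin{itemize}
\item Take $y^*\in N$, and $w^*$ with $\norm{w^*}<\varepsilon<\tfrac12$.
\item Sublinearity and $D\subseteq B_Y$ give $\sigma_D(y^*+w^*)\le -1+\varepsilon$.
\item Meanwhile $\norm{y^*+w^*}\ge 1-\varepsilon>-1+\varepsilon$. So the value $+\norm{y^*+w^*}$ is impossible, and by Step~1 the value is $-\norm{y^*+w^*}$.
\item Normalizing, $N$ contains a neighbourhood of $y^*$ in $S_{Y^*}$.
\end{itemize}
$N$ is also closed, by norm continuity of $\sigma_D$. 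Moreover $y^*\in N$ forces $-y^*\notin N$, since $\sigma_D(y^*)+\sigma_D(-y^*)\ge0$. If $\dim Y\ge2$, the sphere $S_{Y^*}$ is connected, so $N=\emptyset$ or $N=S_{Y^*}$. The latter is impossible by the antipodal remark, so $\varphi=\norm{\cdot}=\sigma_{B_Y}$.

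\emph{The one-dimensional case.} This is the step I expect to be the real obstacle, since there the sphere is disconnected. Step~1 then leaves the possibility $D=\{y\}$ with $\norm{y}=1$, i.e.\ $\varphi$ linear; indeed $\sigma_{\{y\}}$ is a genuine extreme point, so extremality alone cannot exclude it. I plan to exclude a singleton $D=\{y\}$ in any dimension using surjectivity again.
\begin{itemize}
\item Proposition~\ref{prop:common-direction} reads $F(A\oplus tB_X)=F(A)+ty$ for all $A$ and $t\ge0$.
\item Let $A_t:=F^{-1}(\{-ty\})\in\cC(X)$, which exists by surjectivity. Then
\[
F(A_t\oplus tB_X)=\{0\},\qquad\text{so}\qquad A_t\oplus tB_X=F^{-1}(\{0\})=:A_0
\]
for every $t\ge0$, by injectivity of $F$.
\item Thus the fixed bounded set $A_0$ contains a translate of $tB_X$ for every $t$. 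This contradicts boundedness.
\end{itemize}
In dimension one, $D\subseteq[-1,1]$ with $\sigma_D(\pm1)=\pm1$ in the sense of Step~1 leaves only $D=[-1,1]$, $\{1\}$ or $\{-1\}$, and the singletons are now excluded. Hence $\varphi=\sigma_{B_Y}$ in all cases.
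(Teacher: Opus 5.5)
Your proposal is correct and follows the paper's proof essentially step for step. Extremality applied to $\varphi\pm(\sigma_{B_Y}-\abs{\varphi})$ forces $\abs{\varphi}=1$ on $S_{Y^*}$; connectedness of $S_{Y^*}$, continuity and the width inequality settle $\dim Y\ge2$; and surjectivity of $F$ excludes the singleton directions in dimension one. The only cosmetic difference is in that last exclusion: you show $F^{-1}(\{0\})$ would contain balls of every radius, while the paper notes that $P_t^X=F^{-1}\circ Q_t\circ F$ would be surjective even though it misses singletons. Both arguments rest on the same non-invertibility of $P_t^X$.
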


\begin{proof}
Positive homogeneity and $\norm{\varphi}_\infty=1$ imply
\[
\abs{\varphi(y^*)}\le \sigma_{B_Y}(y^*)\qquad(y^*\in Y^*).
\]
By Lemma~\ref{lem:lattice},
\[
h:=\sigma_{B_Y}-\abs{\varphi}\in\mathfrak{E}(Y).
\]
For $y^*\in S_{Y^*}$, put $a=\varphi(y^*)\in[-1,1]$. Then $h(y^*)=1-\abs{a}$, and
\[
\abs{a+(1-\abs{a})}\le1,
\qquad
\abs{a-(1-\abs{a})}\le1.
\]
Consequently, $\norm{\varphi+h}_\infty\le1$ and $\norm{\varphi-h}_\infty\le1$. Since
\[
\varphi=\frac12\bigl[(\varphi+h)+(\varphi-h)\bigr],
\]
Lemma~\ref{lem:extreme-direction} implies $h=0$. Thus
\begin{equation}\label{eq:d-abs-one}
\abs{\varphi(y^*)}=1\qquad(y^*\in S_{Y^*}).
\end{equation}

Suppose first that $\dim Y\ge2$. The sphere $S_{Y^*}$ is path connected in the norm topology: non-antipodal points can be joined by normalizing their line segment, and antipodal points can be joined through a third direction. Since $\varphi=\sigma_D$ for some $D\in\cC(Y)$,
\[
\abs{\varphi(y_1^*)-\varphi(y_2^*)}
\le
\left(\sup_{y\in D}\norm{y}\right)\norm{y_1^*-y_2^*}.
\]
Hence $\varphi$ is norm-continuous. By \eqref{eq:d-abs-one}, its restriction to $S_{Y^*}$ takes only the values $-1$ and $1$, so it is constant. The constant cannot be $-1$, because
\[
\varphi(y^*)+\varphi(-y^*)
=\sup_{y\in D}\pair{y^*}{y}-\inf_{y\in D}\pair{y^*}{y}
\ge0.
\]
Thus $\varphi=1$ on $S_{Y^*}$, and positive homogeneity gives $\varphi=\sigma_{B_Y}$.

Now suppose $\dim Y=1$. Choose $e\in S_Y$ and $e^*\in S_{Y^*}$ with $e^*(e)=1$. Equation~\eqref{eq:d-abs-one} and the nonnegativity of the width give the possibilities
\[
\bigl(\varphi(e^*),\varphi(-e^*)\bigr)
\in\{(1,1),(1,-1),(-1,1)\}.
\]
These correspond, respectively, to $\varphi=\sigma_{B_Y}$, $D=\{e\}$, and $D=\{-e\}$. In either singleton case, the map
\[
Q_t(C):=C\oplus tD
\]
is a translation of $\cC(Y)$ and is therefore surjective. By \eqref{eq:image-ray-set}, however,
\[
Q_t=F\circ P_t^X\circ F^{-1}.
\]
For $t>0$, $P_t^X$ is not surjective. Indeed, for any $x^*\in S_{X^*}$,
\[
\sigma_{P_t^X(A)}(x^*)+\sigma_{P_t^X(A)}(-x^*)
=\sigma_A(x^*)+\sigma_A(-x^*)+2t
\ge2t,
\]
whereas every singleton has width zero. This contradicts the surjectivity of $Q_t$. The singleton cases are therefore impossible, and $\varphi=\sigma_{B_Y}$.
\end{proof}

\begin{theorem}[Preservation of outer parallel bodies]\label{thm:parallel-preservation}
Let $X$ and $Y$ be real Banach spaces, and let $F:\cC(X)\to\cC(Y)$ be a surjective Hausdorff isometry. Then
\begin{equation}\label{eq:parallel-preservation}
F(A\oplus tB_X)=F(A)\oplus tB_Y
\end{equation}
for every $A\in\cC(X)$ and $t\ge0$.
\end{theorem}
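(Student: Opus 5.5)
This theorem is essentially a repackaging of Proposition~\ref{prop:common-direction} together with Lemma~\ref{lem:identify-direction}, so the proof will be a short assembly of those two results. First, Proposition~\ref{prop:common-direction} provides a single direction $\varphi\in\mathfrak{V}(Y)$ with $\norm{\varphi}_\infty=1$ such that
\[
\sigma_{F(P_t^X(A))}=\sigma_{F(A)}+t\varphi
\qquad(A\in\cC(X),\ t\ge0).
\]
Second, Lemma~\ref{lem:identify-direction} identifies this direction as $\varphi=\sigma_{B_Y}$. Substituting this, and using \eqref{eq:parallel-support} for $Y$ in place of $X$, the right-hand side becomes
\[
\sigma_{F(A)}+t\sigma_{B_Y}=\sigma_{F(A)\oplus tB_Y}.
\]

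The final step converts the equality of support functions into an equality of sets. Both $F(P_t^X(A))=F(A\oplus tB_X)$ and $F(A)\oplus tB_Y$ belong to $\cC(Y)$. Bounded closed convex sets are determined by their support functions, as recalled in Section~2 via the separation theorem. Hence the two sets coincide, which is \eqref{eq:parallel-preservation}. The case $t=0$ needs no argument, since both sides are then just $F(A)$.

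There is no real obstacle at this stage. The substantive work lies earlier: the extremality argument of Lemma~\ref{lem:extreme-direction}, which forces $\abs{\varphi}=1$ on $S_{Y^*}$, and the connectedness and one-dimensional case analysis of Lemma~\ref{lem:identify-direction}. The only point to check here is that the hypotheses of those results are exactly those of the theorem, namely that $F$ is a surjective Hausdorff isometry, with no further assumption on $X$ or $Y$.
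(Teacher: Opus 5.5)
Your proposal is correct and follows the paper's proof: Proposition~\ref{prop:common-direction} and Lemma~\ref{lem:identify-direction} together give $\varphi=\sigma_{B_Y}$, and substituting this into \eqref{eq:image-ray-support} yields \eqref{eq:parallel-preservation}. The only difference is that you write out the last step, namely $\sigma_{F(A)}+t\sigma_{B_Y}=\sigma_{F(A)\oplus tB_Y}$ together with the fact that sets in $\cC(Y)$ are determined by their support functions; the paper leaves this implicit in the set form \eqref{eq:image-ray-set} with $D=B_Y$.
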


\begin{proof}
Proposition~\ref{prop:common-direction} and Lemma~\ref{lem:identify-direction} give $\varphi=\sigma_{B_Y}$. Substitution into \eqref{eq:image-ray-support} yields \eqref{eq:parallel-preservation}.
\end{proof}

\section{Recovering the inclusion order}

The next formula converts the Hausdorff distance into a one-sided comparison of support functions.

\begin{proposition}[Metric characterization of inclusion]\label{prop:metric-inclusion}
Let $A,B\in\cC(X)$. If $t>\dH(A,B)$, then
\begin{equation}\label{eq:one-sided-distance}
\dH(A\oplus tB_X,B)
=t+\sup_{x^*\in S_{X^*}}\bigl(\sigma_A(x^*)-\sigma_B(x^*)\bigr).
\end{equation}
Consequently, for every such $t$,
\begin{equation}\label{eq:metric-inclusion}
A\subseteq B
\quad\Longleftrightarrow\quad
\dH(A\oplus tB_X,B)\le t.
\end{equation}
\end{proposition}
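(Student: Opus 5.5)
The plan is to compute the Hausdorff distance through the support-function formula \eqref{eq:hausdorff-support}. By positive homogeneity, the supremum over $B_{X^*}$ can be taken over $S_{X^*}$; points $x^*$ with $\norm{x^*}<1$ give values that are scaled copies of values on the sphere and are controlled by them. One point needs care: the function $\abs{h}$ is positively homogeneous, so $\sup_{B_{X^*}}\abs{h}=\max\{0,\sup_{S_{X^*}}\abs{h}\}$. Since $X\neq\{0\}$, the sphere $S_{X^*}$ is nonempty and we may work on it directly.

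By \eqref{eq:parallel-support} and \eqref{eq:ball-support}, for $x^*\in S_{X^*}$ we have
\[
\sigma_{A\oplus tB_X}(x^*)-\sigma_B(x^*)=t+g(x^*),\qquad g:=\sigma_A-\sigma_B .
\]
The hypothesis $t>\dH(A,B)=\sup_{S_{X^*}}\abs{g}$ gives $t+g(x^*)\ge t-\abs{g(x^*)}>0$ for every $x^*\in S_{X^*}$. So the absolute value in \eqref{eq:hausdorff-support} can be dropped, and
\[
\dH(A\oplus tB_X,B)=\sup_{x^*\in S_{X^*}}\bigl(t+g(x^*)\bigr)=t+\sup_{S_{X^*}}g,
\]
which is \eqref{eq:one-sided-distance}. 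The only subtle point is this removal of the absolute value. It is exactly where the strict inequality $t>\dH(A,B)$ is used, and it also guarantees that the supremum over $B_{X^*}$ coincides with the supremum over $S_{X^*}$, because the sphere values are positive.

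For \eqref{eq:metric-inclusion}, by \eqref{eq:one-sided-distance} the condition $\dH(A\oplus tB_X,B)\le t$ is equivalent to $\sup_{S_{X^*}}(\sigma_A-\sigma_B)\le0$, that is, $\sigma_A\le\sigma_B$ on $S_{X^*}$. Positive homogeneity extends this to all of $X^*$; the value at $0$ is $0=0$. By the inclusion criterion for support functions recalled in Section~2, this is equivalent to $A\subseteq B$.
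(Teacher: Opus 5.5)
Your proposal is correct and follows essentially the same route as the paper: you write $\dH(A\oplus tB_X,B)=\norm{\sigma_A-\sigma_B+t\sigma_{B_X}}_\infty$, reduce to $S_{X^*}$ by positive homogeneity, use $t>\dH(A,B)$ to drop the absolute value, and then invoke the support-function criterion for inclusion. Your remark that the positivity of the sphere values is what makes the ball and sphere suprema agree is superfluous, since positive homogeneity alone already gives that, as you noted earlier, but it does no harm.
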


\begin{proof}
Put $h=\sigma_A-\sigma_B$. Then $\norm{h}_\infty=\dH(A,B)<t$, and
\[
\dH(A\oplus tB_X,B)=\norm{h+t\sigma_{B_X}}_\infty.
\]
For $x^*\in S_{X^*}$, $\sigma_{B_X}(x^*)=1$ and $t+h(x^*)>0$. Positive homogeneity allows the supremum over $B_{X^*}$ to be taken over $S_{X^*}$, and positivity allows the absolute value to be omitted. Therefore
\[
\norm{h+t\sigma_{B_X}}_\infty
=\sup_{x^*\in S_{X^*}}\bigl(t+h(x^*)\bigr)
=t+\sup_{x^*\in S_{X^*}}h(x^*).
\]
This proves \eqref{eq:one-sided-distance}; no attainment of the supremum is required.

For bounded closed convex sets,
\[
A\subseteq B
\quad\Longleftrightarrow\quad
\sigma_A\le\sigma_B\text{ on }X^*
\quad\Longleftrightarrow\quad
\sup_{x^*\in S_{X^*}}h(x^*)\le0.
\]
Combining this with \eqref{eq:one-sided-distance} gives \eqref{eq:metric-inclusion}.
\end{proof}

\begin{theorem}[Order rigidity]\label{thm:order-rigidity}
Every surjective Hausdorff isometry $F:\cC(X)\to\cC(Y)$ is an order isomorphism:
\[
A\subseteq B
\quad\Longleftrightarrow\quad
F(A)\subseteq F(B)
\qquad(A,B\in\cC(X)).
\]
\end{theorem}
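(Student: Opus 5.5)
Combining Theorem~\ref{thm:parallel-preservation} with Proposition~\ref{prop:metric-inclusion} gives the result almost immediately. Both the metric and the parallel-body operation are transported by $F$, and inclusion is expressed purely in terms of these two data.

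Fix $A,B\in\cC(X)$ and choose any $t>\dH(A,B)$. Since $F$ is an isometry, $\dH(F(A),F(B))=\dH(A,B)<t$. The same $t$ therefore satisfies the hypothesis of Proposition~\ref{prop:metric-inclusion} both in $X$ for the pair $(A,B)$ and in $Y$ for the pair $(F(A),F(B))$.

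Applying \eqref{eq:metric-inclusion} in $X$ gives
\[
A\subseteq B\iff \dH(A\oplus tB_X,B)\le t .
\]
Because $F$ is an isometry,
\[
\dH(A\oplus tB_X,B)=\dH\bigl(F(A\oplus tB_X),F(B)\bigr).
\]
By Theorem~\ref{thm:parallel-preservation}, $F(A\oplus tB_X)=F(A)\oplus tB_Y$, so this quantity equals $\dH(F(A)\oplus tB_Y,F(B))$. Applying \eqref{eq:metric-inclusion} in $Y$ then shows that $\dH(F(A)\oplus tB_Y,F(B))\le t$ holds if and only if $F(A)\subseteq F(B)$. Chaining these equivalences yields $A\subseteq B\iff F(A)\subseteq F(B)$.

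The only point needing care is that $t$ must exceed the Hausdorff distance in both spaces simultaneously. This is automatic because $F$ preserves distances, so no separate argument is required. The real difficulty, identifying the image direction as $\sigma_{B_Y}$, has already been settled in Lemma~\ref{lem:identify-direction}. The proof here is just this short chain of equivalences, with no further obstacle expected.
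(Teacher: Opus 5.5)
Your proposal is correct and matches the paper's proof: choose $t>\dH(A,B)=\dH(F(A),F(B))$, then chain Proposition~\ref{prop:metric-inclusion} in $X$, the isometry property of $F$, Theorem~\ref{thm:parallel-preservation}, and Proposition~\ref{prop:metric-inclusion} in $Y$. You also correctly observe that distance preservation makes the same $t$ admissible in both spaces, which the paper records as the equality $\dH(A,B)=\dH(F(A),F(B))$.
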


\begin{proof}
Fix $A,B\in\cC(X)$ and choose
\[
t>\dH(A,B)=\dH(F(A),F(B)).
\]
Proposition~\ref{prop:metric-inclusion} and Theorem~\ref{thm:parallel-preservation} give
\begin{align*}
A\subseteq B
&\Longleftrightarrow \dH(A\oplus tB_X,B)\le t\\
&\Longleftrightarrow \dH(F(A\oplus tB_X),F(B))\le t\\
&\Longleftrightarrow \dH(F(A)\oplus tB_Y,F(B))\le t\\
&\Longleftrightarrow F(A)\subseteq F(B).
\end{align*}
\end{proof}

\section{Proof of the main theorem and consequences}

\begin{proof}[Proof of the Main Theorem \ref{thm:main}]
By Theorem~\ref{thm:order-rigidity}, $F$ is an order isomorphism. The minimal elements of $\cC(X)$ are exactly the singleton sets: every singleton is minimal, whereas any set containing two distinct points properly contains a singleton. The same holds in $\cC(Y)$. Thus both $F$ and $F^{-1}$ preserve singleton sets.

Define $T:X\to Y$ by
\[
F(\{x\})=\{T(x)\}\qquad(x\in X).
\]
Singleton preservation in both directions implies that $T$ is surjective. For $x,z\in X$,
\begin{align*}
\norm{T(x)-T(z)}
&=\dH(\{T(x)\},\{T(z)\})\\
&=\dH(F(\{x\}),F(\{z\}))\\
&=\dH(\{x\},\{z\})
=\norm{x-z}.
\end{align*}
Hence $T$ is a surjective isometry, and the theorem of Mazur and Ulam \cite{MazurUlam1932} implies that $T$ is affine.

For $A\in\cC(X)$ and $x\in X$,
\[
x\in A
\quad\Longleftrightarrow\quad
\{x\}\subseteq A
\quad\Longleftrightarrow\quad
F(\{x\})\subseteq F(A)
\quad\Longleftrightarrow\quad
T(x)\in F(A).
\]
Since $T$ is onto, $F(A)=T[A]$. This proves \eqref{eq:mainrep}.
\end{proof}

\begin{corollary}[Preservation of compact convex sets]\label{cor:compact}
Under the hypotheses of Theorem~\ref{thm:main}, for every $A\in\cC(X)$,
\[
A\in\cK(X)
\quad\Longleftrightarrow\quad
F(A)\in\cK(Y).
\]
Consequently, $F(\cK(X))=\cK(Y)$.
\end{corollary}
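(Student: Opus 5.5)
The plan is to use the representation $F(A)=T[A]$ from the Main Theorem. There $T:X\to Y$ is a surjective affine isometry, so $T=y_0+U$ with $U$ a surjective linear isometry, and $T^{-1}$ is again a surjective affine isometry. With this in hand, compactness is transferred through the homeomorphism $T$. No further metric analysis of the hyperspace is needed.

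First I show that $A\in\cK(X)$ implies $F(A)\in\cK(Y)$. If $A$ is compact, then $T[A]$ is the continuous image of a compact set, hence compact. It is also convex and closed, since $T$ is affine and $T[A]$ is compact. Therefore $F(A)=T[A]\in\cK(Y)$. For the converse, suppose $F(A)=T[A]$ is compact. Then $A=T^{-1}[T[A]]$ is the image of a compact set under the continuous map $T^{-1}$, so $A$ is compact. As $A\in\cC(X)$ already, $A\in\cK(X)$. This proves the equivalence.

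Finally, I show $F(\cK(X))=\cK(Y)$. The forward implication gives $F(\cK(X))\subseteq\cK(Y)$. For the reverse inclusion, take $K\in\cK(Y)$. By surjectivity of $F$ there is $A\in\cC(X)$ with $F(A)=K$, and the reverse implication then gives $A\in\cK(X)$. Explicitly, $A=T^{-1}[K]$.

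There is no real obstacle here. The only point needing care is that $T$ is a bijection with continuous (indeed isometric) inverse, so that $T^{-1}[T[A]]=A$ and compactness passes in both directions. Both facts follow from $T$ being a surjective isometry.
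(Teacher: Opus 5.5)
Your proof is correct and matches the paper's own argument: both use the representation $F(A)=T[A]$ and the fact that $T$ is a homeomorphism to transfer compactness in both directions. You additionally spell out how the surjectivity of $F$ gives $F(\cK(X))=\cK(Y)$, which the paper leaves implicit.
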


\begin{proof}
The representation $F(A)=T[A]$ and the fact that $T$ is a homeomorphism show that compactness is preserved in both directions.
\end{proof}

\begin{corollary}[Preservation of closed convex hulls and line segments]\label{cor:hulls}
Under the hypotheses of Theorem~\ref{thm:main}, if $M\subseteq X$ is nonempty and bounded, then
\[
F(\co M)=\co T[M].
\]
In particular,
\[
F([x,z])=[T(x),T(z)]\qquad(x,z\in X).
\]
\end{corollary}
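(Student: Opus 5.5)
The plan is to read Corollary~\ref{cor:hulls} off the representation $F(A)=T[A]$ from the Main Theorem~\ref{thm:main}. Both sides of the hull identity are in $\cC(Y)$: $\co M$ is bounded, closed and convex, and $T[M]$ is bounded because $T$ is an isometry, so $\co T[M]$ is also nonempty, bounded, closed and convex. It therefore suffices to prove
\[
T[\co M]=\co T[M].
\]

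First I use that $T$ is affine. Write $T=S+T(0)$, where $S$ is a surjective linear isometry. Affine maps carry convex combinations to convex combinations, so $T[\operatorname{co}M]=\operatorname{co}T[M]$ for the ordinary (non-closed) convex hull. Second, $T$ is a homeomorphism of $X$ onto $Y$, because it is a surjective isometry. Hence it commutes with taking closures: $T[\overline{N}]=\overline{T[N]}$ for every $N\subseteq X$. Applying this with $N=\operatorname{co}M$ gives
\[
T[\co M]=\overline{T[\operatorname{co}M]}=\overline{\operatorname{co}T[M]}=\co T[M].
\]
Combining this with $F(\co M)=T[\co M]$ proves the first claim.

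For segments, take $M=\{x,z\}$. Then $\co M=[x,z]$, since the segment is compact and so already closed. Likewise $\co T[M]=[T(x),T(z)]$, which gives $F([x,z])=[T(x),T(z)]$.

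I expect no step to be a real obstacle. The only point needing care is the closure step, where one must use that $T$ is a homeomorphism onto $Y$; this relies on surjectivity, not merely on $T$ being an isometric embedding. The boundedness of $T[M]$, which makes $\co T[M]$ a legitimate element of $\cC(Y)$, also has to be noted.
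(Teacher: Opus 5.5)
Your proposal is correct and follows the paper's proof: $F(\co M)=T[\co M]$, affinity of $T$ gives $T[\operatorname{co}M]=\operatorname{co}T[M]$, the homeomorphism property carries closures across, and $M=\{x,z\}$ gives the segment case. One remark overstates things: because $X$ is complete, even a non-surjective isometry sends closed sets to closed sets, so surjectivity is not actually essential for the closure step (this does not affect your proof).
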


\begin{proof}
Since $T$ is affine, $T[\operatorname{co}M]=\operatorname{co}T[M]$. Since $T$ is a homeomorphism, it also preserves closures. Therefore
\[
F(\co M)=T[\co M]=\co T[M].
\]
The assertion about line segments follows by taking $M=\{x,z\}$.
\end{proof}

\subsection*{Acknowledgements}

The authors used Doubao-2.1 Turbo and ChatGPT-5.6 Plus during manuscript preparation to assist with literature search and citation organization. All mathematical arguments, results, and conclusions were independently developed and verified by the authors, who take full responsibility for the content of this article.

\subsection* {Funding}

This work was supported by the National Natural Science Foundation of China
(Grant No.~12271453).

%%%%%%%%%%% To ease editing, use normal size for the references:


\normalsize


\begin{thebibliography}{[HD82]}

%% Use the widest label as parameter above.
%% Reference items can be numbered or have labels of your choice, as below.
%% Arrange the items in the alphabetical order of surnames (and not in the order of labels).
%% For arXiv papers, give the version you are citing.
%% In IMPAN journals, only the title is italicized; boldface is not used.
%% Do NOT give the issue number unless the issues are paginated separately, as in Uspekhi below.
%% All reference items should be cited in the body of the article.
  
%% To ease editing, add:

\normalsize

%%%%%%%%%%%%%

\bibitem{Bandt1986}
C.~Bandt,
\emph{On the metric structure of hyperspaces with Hausdorff metric},
Math. Nachr. \textbf{129} (1986), 176--183.
DOI: 10.1002/mana.19861290113.

\bibitem{ChengDongZhang2013}
L.~Cheng, Y.~Dong, and W.~Zhang,
\emph{On stability of nonsurjective $\varepsilon$-isometries of Banach spaces},
J. Funct. Anal. \textbf{264} (2013), no.~3, 713--734.
DOI: 10.1016/j.jfa.2012.11.017.

\bibitem{ChengZhou2014}
L.~Cheng and Y.~Zhou,
\emph{On perturbed metric-preserved mappings and their stability characterizations},
J. Funct. Anal. \textbf{266} (2014), no.~8, 4995--5015.
DOI: 10.1016/j.jfa.2014.01.023.

\bibitem{ChengChengTuZhang2015}
L.~Cheng, Q.~Cheng, K.~Tu, and J.~Zhang,
\emph{A universal theorem for stability of $\varepsilon$-isometries on Banach spaces},
J. Funct. Anal. \textbf{269} (2015), no.~1, 199--214.
DOI: 10.1016/j.jfa.2014.09.013.

\bibitem{ChengTuZhang2016}
L.~Cheng, K.~Tu, and W.~Zhang,
\emph{On weak stability of $\varepsilon$-isometries on wedges and its applications},
J. Math. Anal. Appl. \textbf{433} (2016), no.~2, 1673--1689.
DOI: 10.1016/j.jmaa.2015.09.011.

\bibitem{ChengShenZhangZhou2017}
L.~Cheng, Q.~Shen, W.~Zhang, and Y.~Zhou,
\emph{More on stability of almost surjective $\varepsilon$-isometries of Banach spaces},
Sci. China Math. \textbf{60} (2017), no.~2, 277--284.
DOI: 10.1007/s11425-016-0288-0.

\bibitem{ChengEtAl2018}
L.~Cheng, Q.~Cheng, Q.~Shen, K.~Tu, and W.~Zhang,
\emph{A new approach to measures of noncompactness of Banach spaces},
Studia Math. \textbf{240} (2018), 21--45.
DOI: 10.4064/sm8448-2-2017.

\bibitem{ChengFangLuoSun2019}
L.~Cheng, Q.~Fang, S.~Luo, and L.~Sun,
\emph{On non-surjective coarse isometries between Banach spaces},
Quaest. Math. \textbf{42} (2019), no.~3, 347--362.
DOI: 10.2989/16073606.2018.1486034.

\bibitem{ChengDong2020}
L.~Cheng and Y.~Dong,
\emph{A note on the stability of nonsurjective $\varepsilon$-isometries of Banach spaces},
Proc. Amer. Math. Soc. \textbf{148} (2020), 4837--4844.
DOI: 10.1090/proc/15114.

\bibitem{ChengZheng2021}
L.~Cheng and Z.~Zheng,
\emph{A convex set-valued version of the Mazur--Ulam theorem on Asplund spaces},
J. Math. Anal. Appl. \textbf{498} (2021), no.~1, 124932.
DOI: 10.1016/j.jmaa.2021.124932.

\bibitem{ChengZheng2022}
L.~Cheng and Z.~Zheng,
\emph{A set-valued extension of the Mazur--Ulam theorem},
Studia Math. \textbf{263} (2022), 121--139.
DOI: 10.4064/sm200120-9-2.

\bibitem{Figiel1968}
T.~Figiel,
\emph{On non linear isometric embeddings of normed linear spaces},
Bull. Acad. Polon. Sci. S\'er. Sci. Math. Astronom. Phys. \textbf{16} (1968), 185--188.

\bibitem{Gevirtz1983}
J.~Gevirtz,
\emph{Stability of isometries on Banach spaces},
Proc. Amer. Math. Soc. \textbf{89} (1983), 633--636.
DOI: 10.1090/S0002-9939-1983-0718987-6.

\bibitem{GodefroyKalton2003}
G.~Godefroy and N.~J.~Kalton,
\emph{Lipschitz-free Banach spaces},
Studia Math. \textbf{159} (2003), 121--141.
DOI: 10.4064/sm159-1-6.

\bibitem{Gruber1978}
P.~M.~Gruber,
\emph{Stability of isometries},
Trans. Amer. Math. Soc. \textbf{245} (1978), 263--277.
DOI: 10.1090/S0002-9947-1978-0511409-2.

\bibitem{GruberLettl1980}
P.~M.~Gruber and G.~Lettl,
\emph{Isometries of the space of convex bodies in Euclidean space},
Bull. London Math. Soc. \textbf{12} (1980), 455--462.
DOI: 10.1112/blms/12.6.455.

\bibitem{HyersUlam1945}
D.~H.~Hyers and S.~M.~Ulam,
\emph{On approximate isometries},
Bull. Amer. Math. Soc. \textbf{51} (1945), 288--289.
DOI: 10.1090/S0002-9904-1945-08337-2.

\bibitem{HyersUlam1947}
D.~H.~Hyers and S.~M.~Ulam,
\emph{On approximate isometries on the space of continuous functions},
Ann. of Math. \textbf{48} (1947), 285--289.
DOI: 10.2307/1969192.

\bibitem{LindenstraussSzankowski1985}
J.~Lindenstrauss and A.~Szankowski,
\emph{Nonlinear perturbations of isometries},
in: Colloquium in Honor of Laurent Schwartz, Vol.~1 (Palaiseau, 1983),
Ast\'erisque \textbf{131} (1985), 357--371.

\bibitem{MazurUlam1932}
S.~Mazur and S.~Ulam,
\emph{Sur les transformations isom\'etriques d'espaces vectoriels norm\'es},
C. R. Acad. Sci. Paris \textbf{194} (1932), 946--948.

\bibitem{OmladicSemrl1995}
M.~Omladi\v{c} and P.~\v{S}emrl,
\emph{On non linear perturbations of isometries},
Math. Ann. \textbf{303} (1995), 617--628.
DOI: 10.1007/BF01461008.

\bibitem{Qian1995}
S.~Qian,
\emph{$\varepsilon$-isometric embeddings},
Proc. Amer. Math. Soc. \textbf{123} (1995), 1797--1803.
DOI: 10.1090/S0002-9939-1995-1246532-1.

\bibitem{Radstrom1952}
H.~R\aa dstr\"om,
\emph{An embedding theorem for spaces of convex sets},
Proc. Amer. Math. Soc. \textbf{3} (1952), 165--169.
DOI: 10.1090/S0002-9939-1952-0045938-2.

\bibitem{Schneider1975}
R.~Schneider,
\emph{Isometrien des Raumes der konvexen K\"orper},
Colloq. Math. \textbf{33} (1975), 219--224.
DOI: 10.4064/cm-33-2-219-224.

\bibitem{SemrlVaisala2003}
P.~\v{S}emrl and J.~V\"ais\"al\"a,
\emph{Nonsurjective nearisometries of Banach spaces},
J. Funct. Anal. \textbf{198} (2003), 268--278.
DOI: 10.1016/S0022-1236(02)00065-2.

\bibitem{Vestfrid2015}
I.~A.~Vestfrid,
\emph{Stability of almost surjective $\varepsilon$-isometries of Banach spaces},
J. Funct. Anal. \textbf{269} (2015), 2165--2170.
DOI: 10.1016/j.jfa.2015.04.009.

\bibitem{ZhouZhangLiu2018}
Y.~Zhou, Z.~Zhang, and C.~Liu,
\emph{On representation of isometric embeddings between Hausdorff metric spaces of compact convex subsets},
Houston J. Math. \textbf{44} (2018), no.~3, 917--925.

\bibitem{ZhouZhangLiu2021}
Y.~Zhou, Z.~Zhang, and C.~Liu,
\emph{Representation of surjective additive isometric embeddings between Hausdorff metric spaces of compact convex subsets in finite-dimensional Banach spaces},
Studia Math. \textbf{257} (2021), no.~1, 111--119.
DOI: 10.4064/sm200326-9-6.

\bibitem{Zhou2021}
Y.~Zhou,
\emph{Representation of surjective additive isometric embeddings between Hausdorff metric spaces of certain bounded closed convex subsets of Banach spaces},
Houston J. Math. \textbf{47} (2021), no.~1, 97--114.

\end{thebibliography}
\end{document}